\providecommand{\algorithmname}{Algorithm}
\theoremstyle{plain}
\newtheorem{thm}{\protect\theoremname}
\let\myFoot\footnote
\renewcommand{\footnote}[1]{\myFoot{#1\vspace{3mm}}}
\providecommand{\theoremname}{Theorem}
\providecommand{\theoremname}{Theorem}
\begin{document}
\title{Brownian symmetrization of planar domains }
\author{Kais Hamza \thanks{Monash University.} , Maher Boudabra \thanks{King Fahd University for Petroleum and Minerals.}}
\maketitle
\begin{abstract}
One of aims of this note is to capture the interest of the mathematical
community to a novel transformation, which we shall call ``Brownian
symmetrization''. This transformation arises from the solution of
the planar Skorokhod embedding problem, first appeared in \cite{gross2019}.
Brownian symmetrization shares some properties with the famous Steiner
symmetrization. However, we show that these two transformations are
not the same and they do not affect each other. 
\end{abstract}

\section{Introduction}

In \cite{gross2019}, the following question has been posed and answered
: Given a distribution $\mu$ with zero mean and finite second moment,
is there a simply connected domain $U$ (containing the origin) such
that if $(Z_{t}=X_{t}+Y_{t}i)_{t\geq0}$ is a standard planar Brownian
motion, then $X_{\tau_{U}}=\Re(Z_{\tau_{U}})$ samples as $\mu$ and
$\mathbb{E}(\tau_{U})<+\infty$, where $\tau_{U}$ is the exit time
from $U$ (We shall omit the subscript of $\tau$ if the domain is
clear from the context) ? In other words, we seek a domain $U$ so
that $Z_{\tau_{U}}\sim\mu$. A domain that solves the problem shall
be called a $\mu$-domain \cite{Boudabra2020a}. The answer of the
question is affirmative. The construction process adopted by the author
generates a domain $U$ with the following geometric properties : 
\begin{itemize}
\item $U$ is symmetric over the real line. 
\item If $\mu(\{x\})>0$ then $\partial U$ contains a vertical line segment
(possibly infinite). 
\end{itemize}
One can notice that if $(a,b)$ is not supported by $\mu$, i.e $(a,b)$
is null set for $\mu$ \footnote{~It means that the c.d.f of $\mu$ is constant on $(a,b)$.},
then, due to the simple connectivity requirement, any $\mu$-domain
would contain the vertical strip $(a,b)\times(-\infty,+\infty)$.
The constraint of the finiteness of the average of the exit time $\tau$
is the one making the problem challenging. To see this, consider the
Redmacher distribution $\mu=\frac{1}{2}\delta_{-1}+\frac{1}{2}\delta_{1}$.
The real part of the stopped Brownian motion samples according to
$\mu$ for the vertical strip $\{-1\leq\Re(z)\leq1\}$ and the "ripped"
plane $\mathbb{C}-\{\Re(z)=\pm1,\vert\Im(z)\vert\geq1\}$. But, in
average, the Brownian motion spends a finite amount of time to leave
the strip while it needs an infinite amount of time to get out of
the "ripped" plane. That is, the vertical strip is the correct domain
for such a distribution. For the sake of clarity, we give an outline
of the key idea of the proof (See \cite{gross2019} for more details).
The core tool of Gross idea is the following theorem. 
\begin{thm}[Lévy's theorem]
\label{conformal }  Let $f:U\rightarrow\mathbb{C}$ be a non constant
univalent function and $Z_{t}$ be a planar Brownian motion running
inside $U$. Then there is a planar Brownian motion $W_{t}$ such
that $f(Z_{t})=W_{\sigma(t)}$ with 
\[
\sigma(t)=\int_{0}^{t}\vert f'(Z_{s})\vert^{2}ds
\]
and $t\in[0,\tau_{U}]$. 
\end{thm}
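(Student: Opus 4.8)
The plan is to prove Lévy's theorem by exhibiting the process $W$ explicitly as a time-changed version of $f(Z)$ and then verifying, via Lévy's characterization, that $W$ is a standard planar Brownian motion. First I would record the analytic input: since $f$ is univalent (hence holomorphic and non-constant), writing $f=u+iv$ the Cauchy--Riemann equations give $u_x=v_y$, $u_y=-v_x$, so the Jacobian matrix of $f$, viewed as a map $\mathbb{R}^2\to\mathbb{R}^2$, is a scalar multiple of a rotation with $|f'|^2 = u_x^2+u_y^2$ equal to the common squared norm of $\nabla u$ and $\nabla v$, while $\nabla u\cdot\nabla v=0$. I would also note that $f'$ has no zeros on $U$ (a consequence of univalence), so $\sigma(t)=\int_0^t|f'(Z_s)|^2\,ds$ is strictly increasing and absolutely continuous on $[0,\tau_U)$, hence admits a continuous inverse $\sigma^{-1}$.

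Next I would apply the complex Itô formula to $M_t := f(Z_t)$ on $[0,\tau_U)$. Since $f$ is holomorphic the second-order term collapses (the two real Laplacian contributions cancel by harmonicity of $u$ and $v$), giving $dM_t = f'(Z_t)\,dZ_t$, i.e.\ in real coordinates $M_t = f(Z_0) + \int_0^t \nabla u(Z_s)\cdot dZ_s + i\int_0^t\nabla v(Z_s)\cdot dZ_s$. Thus $M$ is a continuous local martingale (each component is), and from the Cauchy--Riemann relations above its bracket processes are
\[
\langle \Re M\rangle_t = \langle \Im M\rangle_t = \int_0^t |f'(Z_s)|^2\,ds = \sigma(t), \qquad \langle \Re M, \Im M\rangle_t = 0.
\]
Now define $W_u := M_{\sigma^{-1}(u)}$ for $u$ in the (random) range of $\sigma$. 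By the optional stopping / Dambis--Dubins--Schwarz time-change theorem applied coordinatewise, each component of $W$ is a continuous local martingale in the time-changed filtration with $\langle \Re W\rangle_u = \langle \Im W\rangle_u = u$ and $\langle \Re W,\Im W\rangle_u = 0$; by Lévy's characterization of planar Brownian motion, $W$ is a standard planar Brownian motion (enlarging the probability space to extend $W$ past the range of $\sigma$ if necessary). Unwinding the definition, $W_{\sigma(t)} = M_t = f(Z_t)$ for $t\in[0,\tau_U]$, which is the claim.

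The main obstacle I expect is bookkeeping around the time change rather than anything deep: one must be careful that $\sigma$ is strictly increasing (needing $f'\neq 0$, hence univalence rather than mere holomorphy) so that $\sigma^{-1}$ is well defined and continuous, and one must handle the fact that $\sigma(\tau_U)$ may be finite, so $W$ is a priori only defined on a random time interval and must be extended to a full Brownian motion on a possibly enlarged space. A secondary technical point is justifying the Itô formula and the time-change theorem up to the stopping time $\tau_U$ (localizing along an exhausting sequence of compacts $U_n\uparrow U$ with $\tau_{U_n}\uparrow\tau_U$), and checking that the cross-bracket of the real and imaginary parts genuinely vanishes — this is exactly where the Cauchy--Riemann equations, and nothing else, do the work.
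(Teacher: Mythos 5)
The paper does not prove this statement at all: it quotes Lévy's conformal invariance theorem as a known classical result and cites the literature for it, so there is no in-paper argument to compare yours against. Your proof is the standard and correct one (the one found in, e.g., M\"orters--Peres or Revuz--Yor): Itô's formula plus harmonicity of $\Re f$ and $\Im f$ kills the drift, the Cauchy--Riemann equations give equal quadratic variations $\sigma(t)$ and vanishing cross-variation, and Lévy's characterization applied to the time-changed process $W_u=f(Z_{\sigma^{-1}(u)})$ finishes the job. You correctly flag the two genuine technical points: strict monotonicity of $\sigma$ (which follows from $f'\neq 0$, a consequence of univalence --- though note that for this theorem mere non-constancy of a holomorphic $f$ already suffices, since the zero set of $f'$ is polar and hence not charged by $Z$) and the need to extend $W$ by an independent Brownian motion on an enlarged space when $\sigma(\tau_U^-)<\infty$. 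One small refinement: rather than invoking Dambis--Dubins--Schwarz coordinatewise (which a priori produces two separate time changes), it is cleaner to define $W_u:=f(Z_{\sigma^{-1}(u)})$ directly and verify Lévy's characterization for the pair, using that both components share the \emph{same} bracket $\sigma$; your write-up effectively does this, so the argument stands.
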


Theorem \ref{conformal } is commonly known as the conformal invariance
principle. That is, the image of a planar Brownian motion under the
action of non constant analytic function is yet another planar Brownian
motion, except it runs in a different speed. With the same notation,
it is equivalent to say that $f(Z_{\sigma^{-1}(t)})$ is planar Brownian
motion. The random time $\sigma(\tau_{U}^{-})$ is referred to as
the projection of the exit time $\tau_{U}$. It is not always the
case that $\sigma(\tau_{U}^{-})$ is the exit time of $f(U)$ unless
$f$ is proper which is the case when $f$ is univalent. In particular,
the law of $f(Z_{\tau_{U}})$ is obtainable from that of $Z_{\tau_{U}}$(we
refer the reader to \cite{markowsky2018distribution} for more illustrations
of this fact). In order to solve the problem, Gross's smart idea was
to construct a univalent map acting on the unit disc such that $\Re(f(e^{ti}))$
samples as $\mu$. By the aforementioned conformal invariance principle,
the distribution of the real part of a stopped planar Brownian motion
on the boundary of $f(\mathbb{D})$ follows exactly $\mu$. If $F(x)$
denotes the c.d.f of $\mu$ then the pseudo inverse of $\mu$ (also
referred to as the quantile function) is defined by $G_{\mu}(u):=\inf\{x\mid F(x)\geq u\}$.
A well known property of $G_{\mu}$ is that 
\[
G_{\mu}(\mathrm{Uni}(0,1))\sim\mu
\]
which follows immediately from the definition of $G_{\mu}$ \cite{devroye2006nonuniform}.
Gross then considered the Fourier series of the function $\varphi(\theta):=G_{\mu}(\frac{\vert\theta\vert}{\pi})$
over the interval $(-\pi,\pi)$, say
\[
\sum_{n=1}^{\infty}\widehat{\varphi}(n)\cos(n\theta)
\]
 where $\widehat{\varphi}(n)$ is the $n^{th}$ Fourier coefficient
( $\widehat{\varphi}(0)$ is zero because $\mu$ is centered). The
clue result of Gross is 
\begin{thm}
The map 
\[
\psi(z):=\sum_{n=1}^{\infty}\widehat{\varphi}(n)z^{n}
\]
is one to one in $\mathbb{\mathbb{D}}$. 
\end{thm}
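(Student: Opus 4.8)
The plan is to represent $\psi$ as a superposition of elementary Herglotz-type kernels and then to invoke the classical criterion for analytic maps whose image is convex in the direction of the imaginary axis. First I would check that $\psi$ is a well-defined analytic function: since $\mu$ has finite second moment, $\int_0^1 G_\mu(u)^2\,du=\mathbb{E}_\mu[X^2]<\infty$, so $\varphi=G_\mu(|\cdot|/\pi)$ lies in $L^2(-\pi,\pi)$, its Fourier coefficients are square-summable, and $\widehat\varphi(0)=0$ because $\mu$ is centered; hence $\psi\in H^2(\mathbb{D})$ with real Taylor coefficients. I would also record the formula $\widehat\varphi(n)=2\int_0^1 G_\mu(u)\cos(n\pi u)\,du$ and set aside the degenerate case $\mu=\delta_0$, for which there is nothing to prove.

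Then comes the main computation. Summing the geometric series inside the integral gives
\[
\psi(z)=2\int_0^1 G_\mu(u)\,\frac{z(\cos\pi u-z)}{1-2z\cos\pi u+z^2}\,du .
\]
Differentiating under the integral sign and integrating by parts in the variable $u$ --- the boundary terms at $u=0,1$ vanishing because $\sin\pi u\to 0$ there while, by monotonicity together with square-integrability, $(1-u)G_\mu(u)\to 0$ and $uG_\mu(u)\to 0$ --- I expect to arrive at
\[
\psi'(z)=-\frac{2}{\pi}\int_0^1\frac{\sin\pi u}{1-2z\cos\pi u+z^2}\,dG_\mu(u),
\]
where $dG_\mu$ is the positive Lebesgue--Stieltjes measure of the non-decreasing quantile function $G_\mu$.

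Multiplying by $1-z^2$ and using the elementary partial-fraction identity
\[
\frac{1-z^2}{1-2z\cos\pi u+z^2}=\frac{1}{1-e^{i\pi u}z}+\frac{1}{1-e^{-i\pi u}z}-1,
\]
together with the fact that $\Re\,(1-\zeta z)^{-1}>\tfrac12$ on $\mathbb{D}$ whenever $|\zeta|=1$, shows that this kernel has strictly positive real part for every $u\in(0,1)$ and $z\in\mathbb{D}$. Since $\sin\pi u>0$ on $(0,1)$ and $dG_\mu\ge 0$, it follows that $\Re\big[(1-z^2)\psi'(z)\big]\le 0$ throughout $\mathbb{D}$, equivalently $\Re\big[(1-z^2)(-\psi)'(z)\big]\ge 0$.

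Finally I would conclude with the classical theorem of Robertson (as refined by Hengartner and Schober) on functions convex in one direction: a non-constant analytic $g$ on $\mathbb{D}$ satisfying $\Re[(1-z^2)g'(z)]\ge 0$ is univalent in $\mathbb{D}$, with image convex in the direction of the imaginary axis. Applying this to $g=-\psi$ --- which is non-constant because $\widehat\varphi(1)=2\int_0^1 G_\mu(u)\cos\pi u\,du<0$ by Chebyshev's correlation inequality for the two monotone functions $G_\mu$ and $-\cos\pi u$ --- yields univalence of $-\psi$, hence of $\psi$; as a byproduct one recovers the symmetry of $\psi(\mathbb{D})$ about the real axis and the fact that its vertical sections are intervals, matching the description of $\mu$-domains in the introduction. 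The part I expect to require the most care is twofold: rigorously justifying the vanishing of the boundary terms in the integration by parts when $\mu$ has unbounded support (this is exactly where finite \emph{second} moment is used, not merely $L^2$-membership of $\varphi$), and pinning down a version of the convex-in-one-direction criterion whose sole hypothesis is non-constancy --- should a clean citation be unavailable, the needed implication can be obtained directly from $\Re[(1-z^2)\psi'(z)]\le 0$ via the standard argument that $\Re\psi(e^{i\theta})$ is then monotone on each of $(0,\pi)$ and $(-\pi,0)$.
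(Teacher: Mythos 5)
Your proof is correct, and it is essentially the canonical argument for this statement: the present note only quotes the theorem from \cite{gross2019} without reproducing a proof, and the route taken there (and in \cite{boudabra2019remarks}) is the one you follow --- an integral representation of $\psi'$ against the positive Stieltjes measure $dG_\mu$, positivity of the Herglotz-type kernel $\Re\bigl[(1-z^2)/(1-2z\cos\pi u+z^2)\bigr]>0$, and the Robertson--Hengartner--Schober criterion for maps onto domains convex in the direction of the imaginary axis. Your computations check out (including $\widehat\varphi(1)<0$ via the Chebyshev correlation inequality), and the two places you flag are indeed the only delicate ones, with one correction and one addition: the case $\mu=\delta_0$ is not ``nothing to prove'' but a genuine exclusion, since then $\psi\equiv0$ is not injective, so the theorem must be read for non-degenerate $\mu$; and when $\mu$ has unbounded support $dG_\mu$ is an infinite measure near $u=0,1$, so the differentiation under the integral and the summation of the series need a domination such as $\vert\sin(n\pi u)\vert\le n\sin(\pi u)$ together with $\int_0^1\sin(\pi u)\,dG_\mu(u)<\infty$, the latter following from the same endpoint estimates $uG_\mu(u)\to0$ and $(1-u)G_\mu(u)\to0$ (monotonicity plus square-integrability) that kill your boundary terms. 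Finally, if you want the univalence criterion in a citation-free, non-constancy-only form, avoid the boundary-monotonicity sketch and instead substitute $z=\tanh\zeta$: then $\frac{d}{d\zeta}\,\psi(\tanh\zeta)=(1-z^2)\psi'(z)$ has nonpositive (in fact strictly negative) real part on the convex strip $\{\vert\Im\zeta\vert<\pi/4\}$, and the Noshiro--Warschawski theorem gives injectivity of $\psi$ immediately.
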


In \cite{boudabra2019remarks}, the authors extended the construction
to any distribution with some finite $p^{th}$ moment with $p>1$.
Their extension uses a deep result in Fourier analysis: the Carleson-Hunt
theorem \cite{fefferman1973pointwise,Reyna2004}. It states that Fourier
series converges point-wisely a.e when the underlying function is
in $L^{p}$ for some $p>1$, which makes the provided solution a strong
one. As one can see, the technique to produce such $\mu$-domains
is purely analytic as it relies on the Fourier expansion of the quantile
function $\varphi(\theta)$. In particular, the geometric properties
are encoded by the conformal map $\psi(z)$. For example, the symmetry
with respect to the real axis is due to the fact that the coefficients
of $\psi(z)$ are real. In \cite{boudabra2019remarks}, the two authors
gave a criterion to guarantee the uniqueness of Gross construction.
That is, they proved that if a $\mu$-domain is symmetric over the
real line and $\Delta$-convex then it is unique, provided that $\mathbb{E}(\tau^{\frac{p}{2}})<\infty$
for some $p>1$. The $\Delta$-convexity property means that the segment
joining any point of the domain and its symmetric point w.r.t to the
real line remains inside the domain \cite{boudabra2020maximizing,boudabra2019remarks}.
We shall call such a domain of Gross type. In particular, there is
one to one correspondence between distributions and Gross domains.
For example, the unit disc corresponds to the scaled and centered
arc-sine law 
\[
d\mu(x)=\frac{1}{\pi\sqrt{1-x^{2}}}1_{\{x\in(-1,1)\}}.
\]
The area of Gross $\mu$-domain is given by 
\begin{equation}
\int_{\mathbb{D}}\vert\psi'(z)\vert^{2}d\sigma(z)=\pi\sum_{n=1}^{\infty}n\widehat{\varphi}(n)^{2}.\label{eqn:area}
\end{equation}
where $d\sigma(z)$ denotes the infinitesimal area \cite{duren2001univalent}.
The identity (\ref{eqn:area}) shows that the area of Gross domains
is at least $\pi\widehat{\varphi}(1)$, which corresponds to the centered
disc of radius $\vert\widehat{\varphi}(1)\vert$. 

The following algorithm can be coded to generate the $\mu$-domain
using Gross method. Note that the Cartesian equation of the boundary
is obtainable using Hilbert transform \cite{Boudabra2020}

\begin{algorithm}[H]
\label{alg}
\begin{enumerate}
\item Run $X=\Re(Z_{\tau})$ where $Z_{\tau}$ is the stopped Brownian motion.
\item Compute $F_{X}(x)=\mathbb{P}(X\leq x)$.
\item Compute $G(u)=F_{x}^{-1}(u)$.
\item Compute $\widehat{\varphi}(n)=\frac{2}{\pi}\int_{0}^{\pi}G(\frac{\theta}{\pi})\cos(n\theta)d\theta$.
\item Plot $\psi(e^{ti})$ for $t\in(-\pi,\pi)$.
\end{enumerate}
\caption{Algorithm for Gross solution}
\end{algorithm}

The planar curve $\psi(e^{ti})$ is the radial limit of $\psi$, i.e
$\psi(e^{ti}):=\lim_{r\rightarrow1}\psi(re^{ti})$. The existence
of the radial limit follows from the theory of Hardy spaces \cite{duren2000theory,Rudin2001}.
More generally, algorithm (\ref{alg}) applies to $U$ as long as
its underlying distribution $\mu$ has some finite $p^{th}$ moment
with $p>1$. This condition is obviously fulfilled by domains contained
in vertical strips since bounded distributions have finite $p^{th}$
moment for any $p>1$. Although, catching some $p>1$ is not an easy
task for general unbounded domains. Such a $p$ is calculable in the
case of wedges. More precisely, if $W_{2\theta}$ denotes the wedge
$\{\mathbf{Arg}(z)\in(-\theta,\theta)\}$ then $p^{th}$ moments of
$\mu$ are finite whenever $p<\frac{\pi}{2\theta}$ \cite{boudabrathesis,burkholder1977exit}.
In particular, any wedge strictly included in the right half plane
carries a distribution $\mu$ of a finite $p^{th}$ moment for some
$p>1$. In a striking work of D. Burkholder \cite{burkholder1977exit},
the author provided a very strong result connecting planar Brownian
motion and analytic functions. More precisely, he proved the following
equivalences 
\[
\xymatrix{\phantom{.} & H_{p}(f)<+\infty & \phantom{.}\ar@/^{.5pc}/[d]\\
\phantom{.}\ar@/^{.5pc}/[u] & \mathbf{\mathbf{E}}(\tau^{\frac{p}{2}})<+\infty & \phantom{.}\ar@/^{.5pc}/[d]\\
\phantom{.}\ar@/^{.5pc}/[u] & \mathbf{\mathbf{E}}(Z_{\tau}^{*p})<+\infty & \phantom{.}\ar@/^{.5pc}/[d]\\
\phantom{.}\ar@/^{.5pc}/[u]^{\mathbf{\mathbf{E}}(\ln(\tau))<+\infty} & \mathbf{\mathbf{E}}(\lvert Z_{\tau}\rvert^{p})<+\infty & \phantom{.}
}
\]
where $Z_{\tau}^{*}:=\sup_{0\leq t\leq\tau}\vert Z_{\tau}\vert$ and
$H_{p}(f)$ is the $p^{th}$ Hardy norm \footnote{The word "norm" is an abuse of terminology. Hardy norms are "true"
norms only when $p\geq1$ since we lose the convexity property if
$p<1$.} of $f:\mathbb{D}\rightarrow U$ defined by 
\[
H_{p}(f)=\sup_{0\leq r<1}\left\{ \frac{1}{2\pi}\int_{0}^{2\pi}|f(re^{\theta i})|^{p}d\theta\right\} ^{\frac{1}{p}}.
\]
Note that the requirement on $\mathbb{E}(\ln(\tau))$ has been relaxed
to the finiteness of $\mathbb{E}(\ln(\tau+1))$ which was shown to
cover all simply connected domains (See \cite{boudabrathesis} for
more details). The supremum of $p$ such that $H_{p}(f)$ is finite
is called Hardy number of $f$, first introduced in \cite{hansen1970hardy}. 

The technique developed to construct $\mu$-domains in \cite{gross2019}
triggered us to consider an underlying geometric transformation that
we shall call Brownian symmetrization (or Gross symmetrization). That
is, consider a bounded domain $U$ and run a planar Brownian motion
$(Z_{t})_{t}$ until it hits $\partial U$ and denote by $\mu$ the
distribution of $\Re(Z_{\tau})$ with $\tau$ being the exit time
of $Z_{t}$ from $U$. The Brownian symmetrization of $U$ is simply
the $\mu$-domain constructed as in \cite{gross2019}. We shall denote
the Brownian symmetrization by $\mathfrak{B}(U)$

\[
\xymatrix{W\ar[rr]^{\text{extract \ensuremath{\mu}}} &  & \mu\ar[rr] &  & W=\mathfrak{B}(U)=\text{Gross domain generated form \ensuremath{\mu}}}
\]

\section{Steiner symmetrization and Brownian symmetrization are different}

Steiner symmetrization is a geometric transformation technique used
in convex geometry and differential geometry. It involves replacing
a given geometric shape, typically a bounded set in Euclidean space,
with its symmetric counterpart with respect to a chosen axis of symmetry.
More precisely, Steiner symmetrization works as follows : Let $U$
be a planar bounded domain containing the origin. For every $z$ in
$A$ draw the vertical line $x=\Re(z)$, say $L_{x}$, and let $\ell_{x}$
be the length of $L_{x}\cap U$. The Steiner symmetrization of $U$,
denoted here by $\mathfrak{S}(A)$, is simply the union of all segments
$\{\Re(z)+t\ell_{\Re(z)}\mid-\frac{1}{2}<t<\frac{1}{2}\}_{z\in U}$.
A well known property of Steiner symmetrization is that it preserves
the area of $U$ and does not increase the perimeter \cite{chlebik2005perimeter}.
Furthermore, one can remark that $\mathfrak{S}(U)$ is $\Delta$-convex
and symmetric over the real line. In particular, both Brownian symmetrization
and Steiner symmetrization share two common properties : $\Delta$-convexity
and the symmetry over the real axis. Using the uniqueness criterion
in \cite{Boudabra2020}, the following identities hold
\[
\mathfrak{S}\circ\mathfrak{B}(U)=\mathfrak{B}(U)
\]
and 
\[
\mathfrak{B}\circ\mathfrak{S}(U)=\mathfrak{S}(U)
\]

In \cite{gross2019}, the author provided the following example to
show the non uniqueness of the solution of the planar Skorokhod embedding
problem. Let $\mu$ be the probability distribution of $\Re(Z_{\tau})$
where $\tau$ is the exit time from the domain $B$ limited by the
blue boundary. The Brownian symmetrization of $B$ is the domain $R$
limited by the red boundary. Note that the domain $R$ is obtained
empirically. 
\begin{center}
\begin{figure}[H]
\begin{centering}
\includegraphics[width=5cm,height=5cm,keepaspectratio]{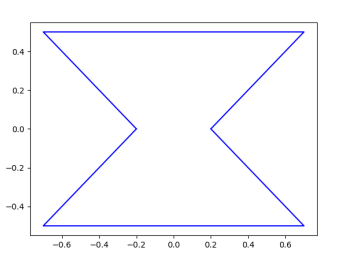}\includegraphics[width=5cm,height=5cm,keepaspectratio]{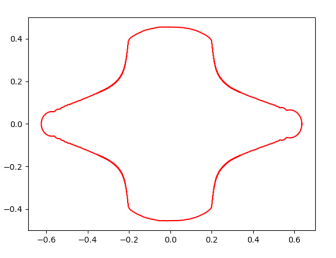} 
\par\end{centering}
\caption{The domain $R$ on the right is the Brownian symmetrization of the
domain $B$ on the left. }
\end{figure}
\par\end{center}

It is easy to see that the Steiner symmetrization of $B$ is the domain
$G$ limited by the green boundary.

\begin{figure}[H]
\begin{centering}
\includegraphics[width=4.5cm,height=4.5cm,keepaspectratio]{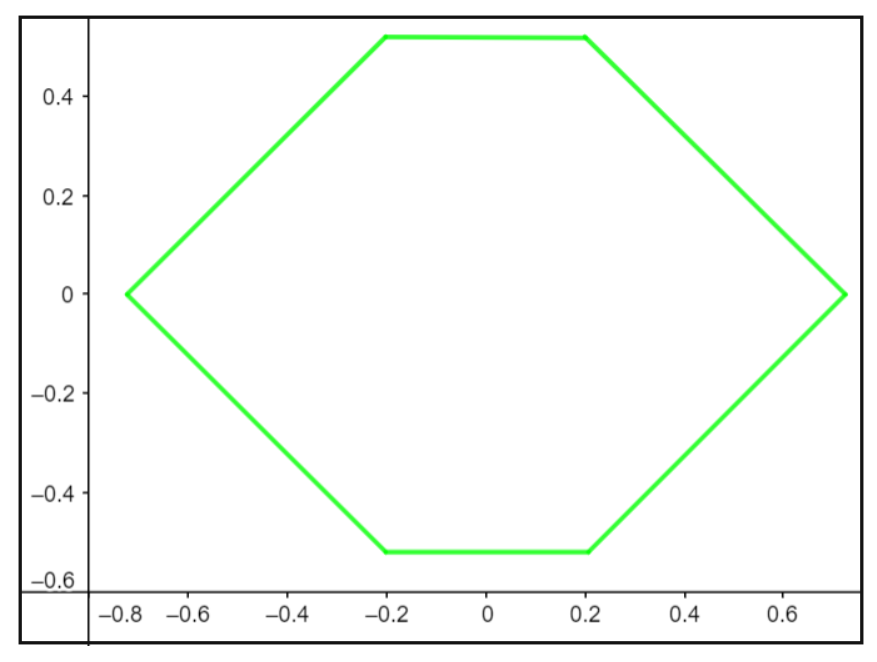} 
\par\end{centering}
\caption{The Steiner symmetrization of the domain $B$.}
\end{figure}

If we believe the simulation then the two domains are not the same,
i.e 
\[
\mathfrak{B}(B)=R\neq\mathfrak{S}(B)=G.
\]
Although $R$ and $B$ are far of being the same, we provide a convincing
argument that proves that the two symmetrizations are different. 
\begin{thm}
\label{T} The Steiner symmetrization and the Brownian symmetrization
are different. 
\end{thm}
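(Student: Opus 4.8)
The plan is to pin down a single bounded domain $U$ on which $\mathfrak{S}$ and $\mathfrak{B}$ demonstrably disagree, the separating functional being the expected exit time $\mathbb{E}[\tau_U]$. The key preliminary remark is that $\mathbb{E}[\tau]$ is an invariant of Brownian symmetrization but not of Steiner symmetrization. Indeed, for any bounded domain $V\ni 0$, if $\mu_V$ denotes the (centred) law of $\Re(Z_{\tau_V})$, then optional stopping applied to the martingales $X_t=\Re(Z_t)$ and $X_t^2-t$ gives $\mathbb{E}[\tau_V]=\mathbb{E}[\Re(Z_{\tau_V})^2]=\mathrm{Var}(\mu_V)$. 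Since $\mathfrak{B}(V)$ is by construction a $\mu_V$-domain, the same identity applied to $\mathfrak{B}(V)$ yields $\mathbb{E}[\tau_{\mathfrak{B}(V)}]=\mathrm{Var}(\mu_V)=\mathbb{E}[\tau_V]$. Hence, were $\mathfrak{S}=\mathfrak{B}$ as transformations, we would have $\mathbb{E}[\tau_{\mathfrak{S}(V)}]=\mathbb{E}[\tau_V]$ for every admissible $V$, and it suffices to contradict this.

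To do so I would take $U=D(0,R)\setminus S$, the disc of radius $R$ about the origin with a radial slit $S=[a,R]\subset\mathbb{R}$ removed, $0<a<R$: this $U$ is bounded, simply connected, contains $0$, and $\mathbb{E}[\tau_U]\le\mathbb{E}[\tau_{D(0,R)}]<\infty$, so $\mathfrak{S}(U)$ and $\mathfrak{B}(U)$ are both defined. First I would compute $\mathfrak{S}(U)$: for each $x\in(-R,R)$ the vertical slice $U\cap\{\Re=x\}$ is the corresponding slice of $D(0,R)$ with at most one point deleted, hence has the same length $2\sqrt{R^2-x^2}$; therefore $\mathfrak{S}(U)=D(0,R)$ and $\mathbb{E}[\tau_{\mathfrak{S}(U)}]=\mathbb{E}[\tau_{D(0,R)}]=R^2/2$. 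Next I would establish the strict inequality $\mathbb{E}[\tau_U]<R^2/2$: since $U\subseteq D(0,R)$ one has $\tau_U\le\tau_{D(0,R)}$ almost surely, with strict inequality on the event that the Brownian motion hits the slit $S$ before $\partial D(0,R)$, an event of positive probability because a line segment is non-polar for planar Brownian motion (equivalently, the harmonic measure of $S$ in $U$ seen from $0$ is strictly positive). Combining the two displays gives $\mathbb{E}[\tau_U]<\mathbb{E}[\tau_{\mathfrak{S}(U)}]$, which contradicts the identity of the first paragraph under the hypothesis $\mathfrak{S}=\mathfrak{B}$; hence $\mathfrak{B}(U)\ne\mathfrak{S}(U)$ and the two symmetrizations are different.

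The argument is essentially soft, so I do not expect a serious obstacle; the two points needing a little care are (i) that Steiner symmetrization ``fills in'' the measure-zero slit, so that $\mathfrak{S}(U)$ is literally the full disc rather than something thinner, and (ii) the strictness in $\mathbb{E}[\tau_U]<\mathbb{E}[\tau_{D(0,R)}]$, which I would justify by the non-polarity of segments (equivalently, positivity of a nonnegative nonconstant harmonic function on a connected domain). A second, more quantitative route worth recording is that $\mathfrak{S}$ preserves area while, by (\ref{eqn:area}), the area of $\mathfrak{B}(U)$ equals $\pi\sum_{n\ge 1}n\,\widehat{\varphi}(n)^2$; taking $U=(-1,1)\times(-\varepsilon,\varepsilon)$ one has $\mu\to\tfrac12(\delta_{-1}+\delta_{1})$ as $\varepsilon\to0$, and Fatou's lemma applied to that series gives $\mathrm{area}(\mathfrak{B}(U))\to+\infty$ whereas $\mathrm{area}(U)=4\varepsilon\to0$, so again $\mathfrak{B}(U)\ne\mathfrak{S}(U)$ for small $\varepsilon$. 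Either way, the underlying reason is that $\mathfrak{B}$ respects the Brownian data attached to $U$ (the law $\mu$, hence $\mathbb{E}[\tau]$, hence in the degenerate limit the area blow-up), while $\mathfrak{S}$ only registers the vertical slice lengths.
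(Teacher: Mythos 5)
Your main argument is correct, and it separates the two symmetrizations by a genuinely different mechanism than the paper. The paper takes a domain whose boundary contains vertical segments of positive length, invokes the F.~and M.~Riesz/Carath\'eodory circle of ideas to conclude that $\mu$ has atoms, and then observes that Gross's construction forces vertical segments in $\partial\mathfrak{B}(U)$ at the atoms $x=-1,0,1$, whereas $\mathfrak{S}(U)$ is a rectangle; the separating feature is thus the presence of atoms, read off from the boundary geometry. You instead use the expected exit time as the separating invariant: by optional stopping (Wald's second identity) $\mathbb{E}[\tau_V]=\int x^2\,d\mu_V$ for any domain with integrable exit time, so $\mathfrak{B}$ preserves $\mathbb{E}[\tau]$ exactly, while slitting the disc strictly decreases $\mathbb{E}[\tau]$ (non-polarity of the segment) and Steiner symmetrization fills the slit back in, restoring $\mathbb{E}[\tau_{D(0,R)}]=R^2/2$. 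This is clean and arguably more elementary (no boundary-regularity theorems are needed), and the observation that $\mathbb{E}[\tau]$ is a $\mathfrak{B}$-invariant is of independent interest; the only point to state explicitly is that for $\mathfrak{B}(U)$, which need not be bounded, the identity $\mathbb{E}[\tau_{\mathfrak{B}(U)}]=\mathrm{Var}(\mu_U)$ uses $\mathbb{E}[\tau_{\mathfrak{B}(U)}]<\infty$ (guaranteed by Gross's construction) together with the $L^{2}$ optional stopping/Wald argument, not just boundedness. The paper's approach, by contrast, buys structural information about $\partial\mathfrak{B}(U)$ (atoms produce vertical boundary segments), which your route does not see.

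One correction to your closing ``more quantitative route'': for the thin rectangle $U=(-1,1)\times(-\varepsilon,\varepsilon)$ the Brownian motion exits through the long horizontal sides almost immediately, so $\mu\to\delta_{0}$ as $\varepsilon\to0$, not $\tfrac12(\delta_{-1}+\delta_{1})$, and neither the variance nor the area of $\mathfrak{B}(U)$ blows up; the Fatou argument as stated fails. To salvage that route you would take a tall rectangle $(-1,1)\times(-M,M)$ with $M\to\infty$ (whose exit law does converge to the Rademacher law, with $\sum_n n\widehat{\varphi}(n)^2=\infty$ in the limit), or simply drop this remark, since your slit-disc argument already proves the theorem.
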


\begin{proof}
Let $U$ be the following domain 
\[
U=\{-1<x\leq0,-\frac{1}{2}<y<1\}\cup\{0<x\leq1,\frac{1}{2}<y<-1\}\cup\{x=0,\vert y\vert<\frac{1}{2}\}.
\]

\begin{figure}[H]
\begin{centering}
\includegraphics[width=7cm,height=7cm,keepaspectratio]{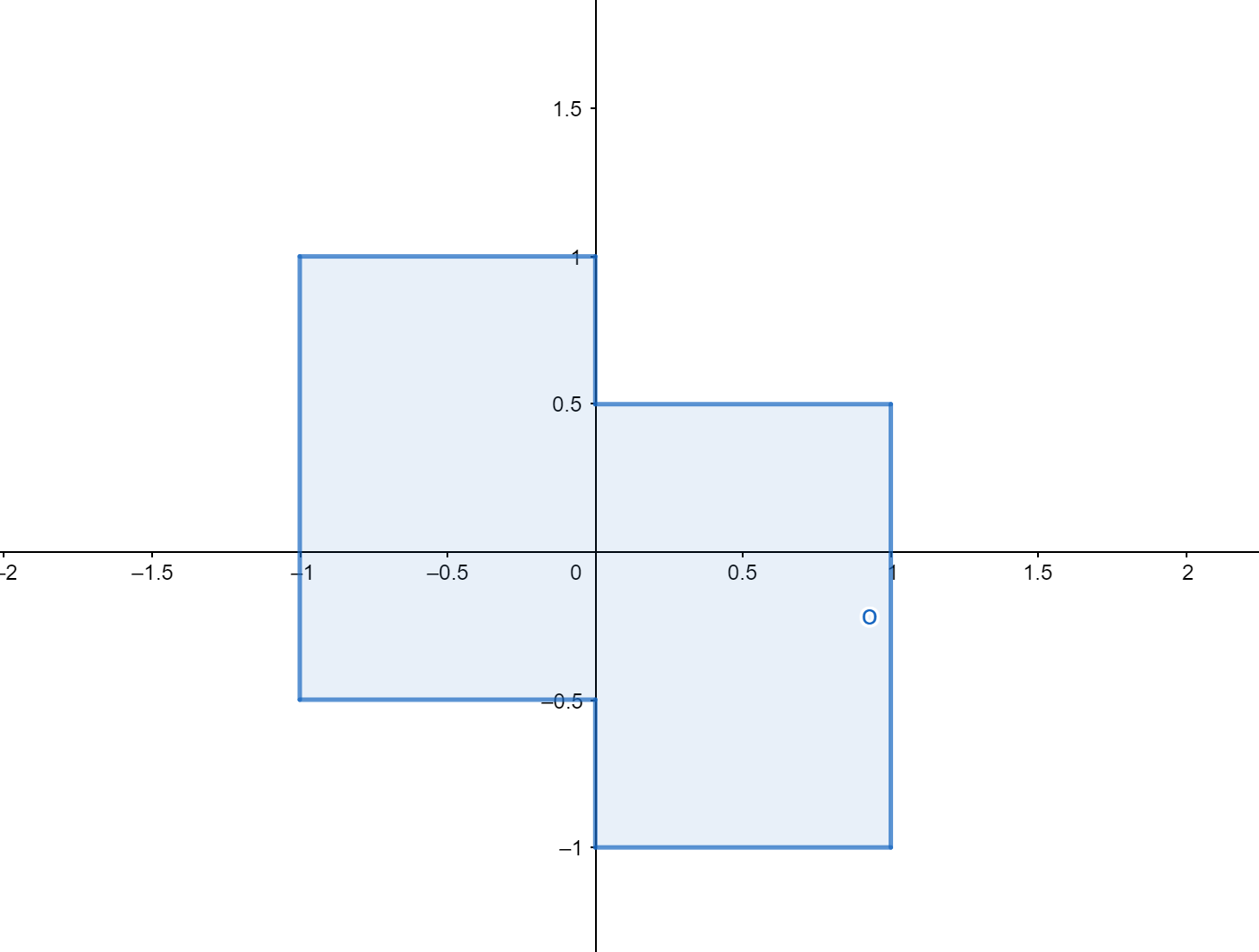} 
\par\end{centering}
\caption{The domain $U$.}
\end{figure}

The Steiner symmetrization of $U$ is the the rectangle $S=\{\vert x\vert<0,\vert y\vert<\frac{3}{4}\}$.

\begin{figure}[H]
\begin{centering}
\includegraphics[width=6cm,height=6cm,keepaspectratio]{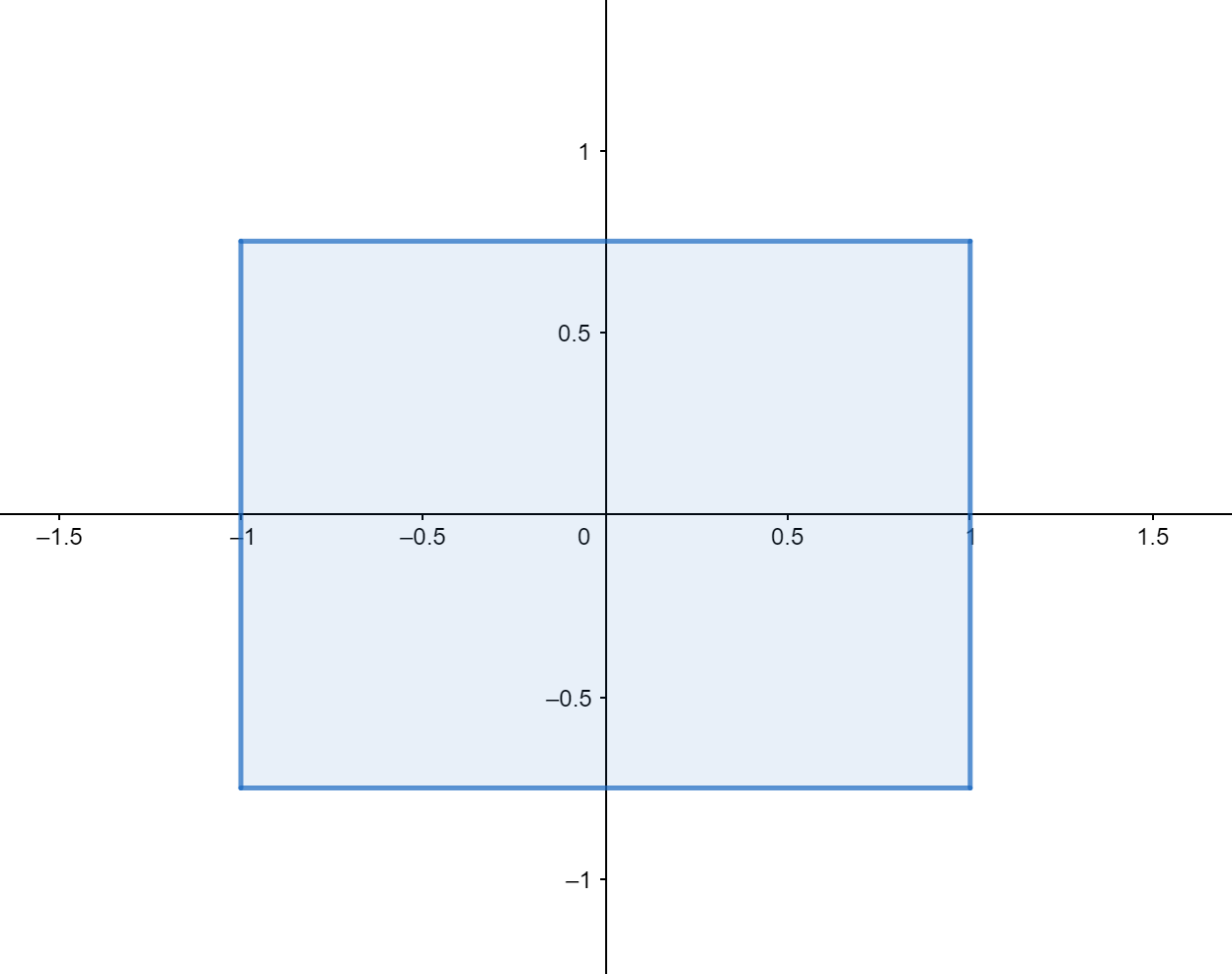} 
\par\end{centering}
\caption{The Steiner symmetrization of $U$.}
\end{figure}

Let $\mu$ be the probability law of $\Re(Z_{\tau_{U}})$. The distribution
$\mu$ has three point masses namely at $x=-1,x=0$ and $x=1$. This
fact uses F. and M. Riesz Theorem that asserts that the harmonic measure
of a simply connected domain with rectifiable boundary is mutually
continuous with respect to the arc length. However, we provide a short
explanation articulating on the conformal invariance of the Brownian
motion \cite{morters2010brownian}. Let $f$ be a conformal map from
the unit disc onto $U$ that sends zero to zero. As the boundary of
$U$ is of Jordan type (in addition of being rectifiable) then by
Carathéodory's theorem \cite{caratheodory1913gegenseitige} the function
$f$ extends continuously to the closed unit disc without losing the
injectivity property. For any portion $E$ of the boundary the harmonic
measure of $E$ is simply $\frac{\vert f^{-1}(E)\vert}{2\pi}$ where
$\vert\cdot\vert$ denotes the Lebesgue measure on the unit circle.
This follows from the fact that the stopped Brownian motion on the
unit circle is uniformly distributed. That is, $E$ has zero harmonic
measure if and only its arc length is zero ($f$ is an homeomorphism).
As the vertical parts of the Boundary of $U$ are of positive lengths
then they induce point masses for $\mu$. Therefore the Brownian symmetrization,
say $V$, would preserve these point masses. According to the construction
process, the boundary of $V$ will contain three vertical segments
(and their symmetric images over the real line) at $x=-1,x=0$ and
$x=1$. Therefore $V=\mathfrak{B}(U)\neq S=\mathfrak{S}(U)$. 
\end{proof}

\section{Concluding remarks}

The Skorokhod embedding problem has another different solution given
in \cite{Boudabra2020a}, which can be studied as another version
of symmetrization. Unlike Gross's method, it generates unbounded domains
even for bounded distributions. More precisely, it creates $\mu$-domains
that are $\Delta^{\infty}$ convex, i.e with the property that the
upward half ray starting at any point of the domain remains inside
it (More details are in \cite{Boudabra2020a}). In addition, domains
with such a property are proved to be unique under some technical
requirement on their exit times. In \cite{mariano2020}, an interesting
optimization problem was suggested : Fix a probability distribution
$\mu$, among all $\mu$-domains, which one has the highest and lowest
rate, where the rate is half of the principal Dirichlet eigenvalue.
The authors answered the question regarding the lowest rate for $\mu$
being the uniform distribution on $(-1,1)$. It turns out that their
solution is a particular case of the solution appeared in \cite{Boudabra2020a}.
The problem of the highest rate is still open. Although, it is guessed
that the Brownian symmetrization maximizes the rate. In the same context,
it is known that Steiner symmetrization minimizes the principal Dirichlet
eigenvalue. So a natural question presents itself : What would be
the effect of the Brownian symmetrization on such an eigenvalue? Another
related question is what is the effect of the Brownian symmetrization
on the perimeter? 

At the end, we provide this example followed by some conjectures.
Let $U$ be the set $\{x^{2}+(y+\kappa)^{2}<1\}$ where $\kappa\in(0,1)$.
The law of $\Re(Z_{\tau})$ is given by 
\[
d\mu(x)={\textstyle \frac{(1-\kappa^{4})}{\pi\left((1-\kappa^{2})^{2}+4\kappa^{2}x^{2}\right)\sqrt{1-x^{2}}}}
\]
\cite{boudabra2019remarks}. The c.d.f of $\mu$ is 
\[
\begin{alignedat}{1}F_{\mu}(x) & =\frac{1}{\pi}\arctan\left({\textstyle \frac{1+\kappa^{2}}{1-\kappa^{2}}\frac{x}{\sqrt{1-x^{2}}}}\right)+{\textstyle \frac{1}{2}}\\
 & =\frac{1}{\pi}\arctan\left({\textstyle \eta\frac{x}{\sqrt{1-x^{2}}}}\right)+{\textstyle \frac{1}{2}}
\end{alignedat}
\]
and so
\[
\varphi_{\mu}(\theta)=G_{\mu}({\textstyle \frac{\vert\theta\vert}{\pi}})=-\sqrt{{\textstyle \frac{\cot^{2}(\theta)}{\eta^{2}+\cot^{2}(\theta)}}}1_{\left\{ \vert\theta\vert\in(0,\frac{\pi}{2})\right\} }+\sqrt{{\textstyle \frac{\cot^{2}(\theta)}{\eta^{2}+\cot^{2}(\theta)}}}1_{\left\{ \vert\theta\vert\in(\frac{\pi}{2},\pi)\right\} }.
\]
Using Mathematica software (many thanks to Professor Rajai Nacer for
his assistance), we obtained the following $\mu$-domain, or equivalently
the Brownian symmetrization of $U$.

\begin{figure}[H]
\begin{centering}
\includegraphics[width=6cm,height=6cm,keepaspectratio]{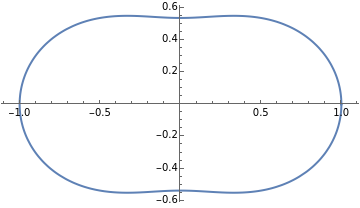}
\par\end{centering}
\caption{$\mathfrak{B}(U)$ is limited by the blue curve.}

\end{figure}

Using the area formula \eqref{eqn:area}, we found that the area of
$\mathfrak{B}(U)$ is approximately $0.6$. In particular 
\[
\mathfrak{B}(U)\subsetneq\mathfrak{S}(U)=\mathbb{D}.
\]
In light of this example, we think the following conjecture might
be true :
\begin{enumerate}
\item Brownian symmetrization increases the principal Dirichlet eigenvalue. 
\item Brownian symmetrization does not increase the area and the perimeter. 
\end{enumerate}
Moreover, we speculate that Brownian symmetrization does better than
Steiner symmetrization. 

\bibliographystyle{plain}
\bibliography{Browniansymmetrization}

\end{document}